\newtheorem{problem}{Problem}
\newcommand{\R}{\mathbb{R}}           
\newcommand{\nth}{^{\text{th}}}       
\newtheorem{theorem}{Theorem}
\newtheorem{lemma}{Lemma}
\newtheorem{proposition}{Proposition}
\newtheorem{definition}{Definition}
\newtheorem*{definition*}{Definition}
\journal{Discrete Mathematics}
\begin{document}
	
	\begin{frontmatter}
		
		\title{Query Complexity of Mastermind Variants}

		\author[aaron]{Aaron Berger\corref{mycorrespondingauthor}}
		
		\author[chris]{Christopher Chute}\author[matt]{Matthew Stone}
		\address{Yale University, 10 Hillhouse Ave, New Haven, CT 06511}
	\fntext[aaron]{aaron.berger@yale.edu}\fntext[chris]{christopher.chute@yale.edu}\fntext[matt]{matthew.i.stone@yale.edu}
		

		\cortext[mycorrespondingauthor]{Corresponding author}

		\begin{abstract}
				We study variants of Mastermind, a popular board game in which the objective is sequence reconstruction. In this two-player game, the so-called \textit{codemaker} constructs a hidden sequence $H = (h_1, h_2, \ldots, h_n)$ of colors selected from an alphabet $\mathcal{A} = \{1,2,\ldots, k\}$ (\textit{i.e.,} $h_i\in\mathcal{A}$ for all $i\in\{1,2,\ldots, n\}$). The game then proceeds in turns, each of which consists of two parts: in turn $t$, the second player (the \textit{codebreaker}) first submits a query sequence $Q_t = (q_1, q_2, \ldots, q_n)$ with $q_i\in \mathcal{A}$ for all $i$, and second receives feedback $\Delta(Q_t, H)$, where $\Delta$ is some agreed-upon function of distance between two sequences with $n$ components. The game terminates when the codebreaker has determined the value of $H$, and the codebreaker seeks to end the game in as few turns as possible. Throughout we let $f(n,k)$ denote the smallest integer such that the codebreaker can determine any $H$ in $f(n,k)$ turns. We prove three main results: First, when $H$ is known to be a permutation of $\{1,2,\ldots, n\}$, we prove that $f(n, n)\ge n - \log\log n$ for all sufficiently large $n$. Second, we show that Knuth's Minimax algorithm identifies any $H$ in at most $nk$ queries. Third, when feedback is not received until all queries have been submitted, we show that $f(n,k)=\Omega(n\log k)$.
		\end{abstract}
		
		\begin{keyword}
			Combinatorial Games\sep Mastermind\sep Query Complexity 
			\MSC[2010] 91A46\sep  68Q25
		\end{keyword}
		
	\end{frontmatter}
	
	\section{Introduction}\label{Intro}
	The original 1970 conception of Mastermind by Mordechai Meirowitz was a sequence reconstruction game.  One player (the \emph{codemaker}) would construct a hidden sequence of four pegs, each peg being one of six colors, and the other player (the \emph{codebreaker}) would make guesses of the same form, receiving feedback after each guess regarding how close they were to the hidden sequence. In 1963 Erd\H os and R\'enyi \cite{ER63} studied the two-color variant of this game, and after the release of Mastermind, Knuth showed that a minimax strategy guarantees guessing the hidden vector in no more than 5 turns \cite{DK76}. Many authors have since studied algorithms to minimize the number of guesses required in the worst case \cite{DS13,VC83,KT86,MG12,OS13,JP11,CC96,JP15, WG03, WG04}, and almost all of these results will be introduced and discussed at relevant points in this paper. We note here that the work of J{\"a}ger and Peczarski \cite{JP11,JP15} and Goddard \cite{WG03,WG04} deal with finding explicit optimal bounds for small numbers of colors and pegs, whereas we deal with asypmtotics when both of these quantities are large.
	
	The variants of Mastermind which we study are defined by the following parameters:
	\begin{enumerate}[label=(\roman*)]
		
		\item ($k$) \textit{Size of Alphabet} 
		
		\item ($n$) \textit{Length of Sequence.} The hidden vector and all guess vectors will be elements of $[k]^n$.
		
		\item ($\Delta$) \textit{Distance Function.} $\Delta$ takes as inputs two vectors in $[k]^n$. The output may, for example, be a single integer, but this will not always be the case. Most research studies the two following distance functions:
		\begin{enumerate}[label=\alph*.]
			\item\textit{``Black-peg and white-peg.''} Informally, a black peg denotes ``the correct color in the correct spot,'' and a white peg denotes ``the correct color in an incorrect spot.'' For two vectors $Q_t$ and $H$, the black-peg and white-peg distance function is the ordered pair $\Delta(Q_t, H) := (b(Q_t, H), w(Q_t, H))$ where
			\begin{equation*}\label{blackHitsDefinition}
				b(Q_t, H) = \left|\{i\in [1,n] \mid q_i = h_i\}\right|,
			\end{equation*}
			and
			\begin{equation*}\label{whiteHitsDefinition}
				w(Q_t, H) = \max_{\sigma}~b(\sigma(Q_t), H) - b(Q_t, H),
			\end{equation*}
			where $\sigma$ iterates over all permutations of $Q_t$. This variant is the distance function used in the original game of Mastermind.
			
			\item\textit{``Black-peg-only.''} This is simply $\Delta(Q_t, H) := b(Q_t, H)$, where $b$ is defined as above.
		\end{enumerate}
		
		\item($R$) \textit{Repetition.} A commonly-studied variant of the game introduces the restriction that the guesses and vectors cannot have repeated components, i.e. they are vectors of the form $v \mid i \neq j \Rightarrow v_i \neq v_j$.
		
		\item($A$) \textit{Adaptiveness.} In the \textit{adaptive} variant of Mastermind, the codebreaker receives $\Delta(Q_t, H)$ after each guess $Q_t$, and may use this information to inform the selection of $Q_{t+1}$.
		
		In the \textit{non-adaptive} variant, the codebreaker submits any number of queries $Q_1, Q_2, \ldots, Q_m$ all at once (so the codebreaker chooses $m$). The codemaker then reports the distances $(\Delta(Q_1, H), \Delta(Q_2, H), \ldots, \Delta(Q_m, H))$, after which the codebreaker must determine $H$ without submitting any additional guesses.
	\end{enumerate}
	
	The adaptive variant with repetitions allowed is the most extensively studied in the literature \cite{DK76,VC83,MG12,CC96}. Doerr, Sp\"{o}hel, Thomas, and Winzen obtain both strong asymptotic lower bounds and an asymptotic improvement in algorithm performance \cite{DS13}, in part applying techniques from \cite{NB09,GK00}. 
	
	We focus only on analyzing the worst-case performance of query strategies for these variants of Mastermind. That is, we always consider the number of queries necessary to guarantee identification of any hidden vector. As such, we will be only be focusing on deterministic strategies for the codebreaker.
	
	Our first main result concerns the \textit{Permutation Variant}, the black-peg, adaptive, no-repeats variant in which $n=k$. In this variant, the hidden sequence $H$ and all guesses $Q_t$ are permutations of $[n]$, hence the name.
	\begin{theorem}\label{permutationGameTheorem}
		For any strategy in the Permutation Variant with $n$ sufficiently large, the codebreaker must use at least $n - \log\log n$ guesses to determine $H$ in the worst case.

	\end{theorem}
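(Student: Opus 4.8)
The plan is to argue from the codemaker's side via an adversary (consistent-set) argument. Let $S_t$ denote the set of permutations consistent with the feedback to the first $t$ queries; the codebreaker has determined $H$ only once $|S_t| = 1$, so it suffices to describe an adversary that keeps $|S_t| \ge 2$ through round $t = n - \log\log n - 1$. Equivalently, every adaptive decision tree of depth $n - \log\log n - 1$ must have a leaf consistent with at least two permutations. As a warm-up and sanity check I would first record two weak baselines. Since each query returns one of at most $n$ values (the count $b(Q_t,H)$, which can never equal $n-1$), always keeping the most popular answer guarantees $|S_t| \ge n!/n^{t}$ and already yields the information-theoretic bound $t \ge n - n/\log n$; and the ``always answer $0$'' adversary, under which $S_t$ is the set of permutations disagreeing with every past query in every coordinate, survives until the allowed bipartite graph has a unique perfect matching, which forces $t \ge (n-1)/2$. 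Neither reaches $n - \log\log n$, so the real work is to improve the lower-order term.

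The main adversary I would build maintains a structured subfamily that certifies $|S_t| \ge 2$: a set $B_t$ of active positions and a set $V_t$ of active values with $|B_t| = |V_t| = r_t$, together with a fixed bijection on the complement, such that every extension by an arbitrary bijection $B_t \to V_t$ is consistent (so $|S_t| \ge r_t!$) and every past query sends $B_t$ outside $V_t$, making the block contribute a constant to the feedback. Given a new query $Q_{t+1}$, the active positions it maps into $V_t$ form a partial matching $M$ of some size $c$; answering ``no agreement on the block'' retains every block-bijection avoiding $M$, from which a fresh active block can be extracted. The game continues while $r_t \ge 2$, so the task reduces to controlling how fast $r_t$ falls.

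The hard part is that this free-block size is too pessimistic a progress measure: an aggressive query mapping all of $B_t$ into $V_t$ (so $c = r_t$) forces the clean block to halve, yet the surviving pool is the full derangement set, of size $D_{r_t} \sim r_t!/e$, barely smaller than $r_t!$. Tracking $r_t$ alone therefore only reproduces an $O(\log n)$ bound, so I would instead track $\log|S_t|$ (equivalently, the number of coordinates not yet forced to a single value) and use the block only to lower-bound it. The crux becomes a bound on the product $\prod_t k_t$ of the numbers $k_t$ of distinct feedback values the codebreaker actually realizes on the evolving pool: each $k_t$ is at most $n$ but also at most $|S_{t-1}|$, and the claim to establish is that the codebreaker cannot realize close to $n$ distinct feedback classes on the shrinking, structured pool in more than all but $\log\log n$ of the rounds. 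Making this precise against a fully adaptive codebreaker is the principal obstacle, and I expect the $\log\log n$ term to emerge exactly from the endgame, the final stretch in which $|S_t|$ and the remaining active block have become small enough that a single query can at last be split into many feedback classes and pin down the permutation.
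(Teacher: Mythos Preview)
Your proposal identifies the right framework (adversary tracking the consistent set $S_t$) and even correctly diagnoses why the two easy adversaries fail, but it never supplies the missing mechanism, and the mechanism you gesture at in the last paragraph is not the one that works.

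The gap is this. You propose to control $\prod_t k_t$, where $k_t$ is the number of distinct feedback values realized on $S_{t-1}$, on the grounds that the codebreaker ``cannot realize close to $n$ distinct feedback classes'' once the pool is small. But nothing in your argument bounds $k_t$ below $n$; even on a tiny $S_{t-1}$ a query can in principle hit many different values of $b(\cdot,Q_t)$. The quantity that \emph{is} constrained is not the number of nonempty response classes but the \emph{size} of the high-$r$ classes: over the full symmetric group, the number of permutations at black-peg distance exactly $r$ from a fixed query is $\binom{n}{r}D(n-r)$, and the tail satisfies
\[
\sum_{r\ge x}\binom{n}{r}D(n-r)\ \le\ \frac{n!}{x!}.
\]
This is the lever the paper uses. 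It gives, for every $x$, the recurrence
\[
|S_t|\ \ge\ \frac{1}{x}\Bigl(|S_{t-1}|-\frac{n!}{x!}\Bigr),
\]
because the buckets with $r\ge x$ can absorb at most $n!/x!$ elements total, and the rest of $S_{t-1}$ is spread over only $x$ buckets. Choosing $x=C_n+t$ and inducting yields $|S_t|/n!\ge (C_n!-(H_{C_n+t}-H_{C_n}))/(C_n+t)!$, and taking $C_n=\lceil\log\log n\rceil$, $t=n-C_n$ gives $|S_{n-C_n}|>1$.

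Your ``active block'' adversary throws away exactly this information: by insisting that \emph{every} bijection on the block be consistent, you lose the derangement-type constraints that make the high-$r$ tail tiny, which is why that adversary stalls at $r_t$ halving and an $O(\log n)$ bound. The fix is not to refine the block structure but to abandon it and work directly with $|S_t|$ together with the absolute bucket caps $\binom{n}{r}D(n-r)$.
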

	
	Explicit algorithms that take $O(n \log n)$ turns to solve this variant were developed by Ko and Teng \cite{KT86}, and El Ouali and Sauerland \cite{OS13}. Ko and Teng approach the problem with an algorithm akin to binary search. El Ouali and Sauerland improve this algorithm and extend it to handle variants with $k\ge n$, while also achieving an average factor of two reduction in the number of queries needed to identify $H$.
	
	Via a basic information-theoretic argument, one can show that the Permutation Game satisfies $f(n, n)\ge n - n/\log n + c$ for some constant $c>0$. We improve this lower bound to $f(n, n)\ge n- \log \log n$ for sufficiently large $n$. To our knowledge, this constitutes the first improvement over the trivial information-theoretic lower bound for the Permutation Game variant of Mastermind.
	
	Our second result concerns Knuth's Minimax algorithm for adaptive variants, which was first introduced in 1976:
	\begin{definition}[Knuth's Minimax Algorithm \cite{DK76}]\label{minimax def}
		At each turn, assign each query a score equal to the maximum across all responses of the number of possible values of $H$ that agree with that response. Guess the query with the minimum score.
		\end{definition}
	\begin{theorem}\label{minimaxTheorem}
		Knuth's Minimax algorithm identifies any hidden sequence $H$ in at most $nk$ queries.
	\end{theorem}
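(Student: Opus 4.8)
The plan is to track a potential that counts, for each coordinate, how many colors are still consistent with the feedback seen so far. Write $S_t \subseteq [k]^n$ for the set of hidden sequences consistent with all responses through turn $t$, so that $S_0 = [k]^n$ and the game ends precisely when $|S_t| = 1$. For a nonempty $T \subseteq [k]^n$ and a coordinate $i$, let $c_i(T) = |\{v \in [k] : H'_i = v \text{ for some } H' \in T\}|$ be the number of colors still live at coordinate $i$, and set $\Phi(T) = \sum_{i=1}^{n} c_i(T)$. Then $\Phi(S_0) = nk$, and since each $c_i(T) \ge 1$ with equality at every coordinate if and only if $|T| = 1$, the game terminates exactly when $\Phi = n$. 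Because $S_{t+1} \subseteq S_t$ forces $c_i(S_{t+1}) \le c_i(S_t)$ for every $i$, the potential $\Phi$ is non-increasing along any play, so the entire task reduces to bounding the number of turns needed to drive $\Phi$ from $nk$ down to $n$.

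First I would record the easy structural facts. As long as $|S_t| \ge 2$, two consistent sequences differ in some coordinate, so there is always a query separating them; hence the minimax score (the size of the largest response class) is at most $|S_t| - 1$, every response class is a proper subset, and $|S_{t+1}| \le |S_t| - 1$. Next, call a turn \emph{productive} if $\Phi$ strictly decreases, and a \emph{plateau} turn if $\Phi(S_{t+1}) = \Phi(S_t)$; a plateau turn is exactly one whose received response preserves the full set of live colors at every coordinate simultaneously. Since $\Phi$ drops by a positive integer on each productive turn and decreases by a total of $nk - n = n(k-1)$ over the whole game, there are at most $n(k-1)$ productive turns.

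The crux, and the step I expect to be the main obstacle, is bounding the plateau turns; here I would exploit the defining property of the minimax rule, that it chooses a query minimizing the largest response class. A plateau response class $S_{t+1}$ must retain every live color at every coordinate, so it inherits the full coordinatewise color-diversity of $S_t$. I would argue that such a class can be the minimax outcome only when $S_t$ is large and $S_{t+1}$ is a small fraction of it: otherwise the very diversity contained in $S_{t+1}$ exhibits a query splitting $S_t$ more evenly than the one the algorithm selected, contradicting minimality. Making this quantitative, so as to conclude that each plateau turn contracts $|S_t|$ by a definite factor and that the plateau turns therefore number at most $n$, is the delicate part, since it requires controlling how black-peg responses interact with the per-coordinate color supports (the worst cases being the highly symmetric sets, such as all of $[k]^2$, where every query is forced to leave one support-preserving class). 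Combining the at most $n(k-1)$ productive turns with a bound of at most $n$ plateau turns then yields the desired total of at most $nk$ queries.
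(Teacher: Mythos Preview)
Your proposal has a genuine gap. You set up the potential $\Phi$ correctly and the bound of $n(k-1)$ on productive turns is fine, but the entire theorem then rests on the claim that there are at most $n$ plateau turns, and you never prove this---you explicitly label it ``the delicate part'' and offer only a heuristic. That heuristic is also shaky: the minimax rule selects the \emph{query}, but whether the realised turn is a plateau is determined by the adversary's \emph{response}, so there is no direct contradiction between ``the query was minimax-optimal'' and ``the largest response class happens to preserve every live color at every coordinate.'' Indeed, starting from $S_0=[k]^n$, every query is equivalent by symmetry and the worst-case response class (the middle black-peg count) is a plateau class; nothing in your sketch prevents this from recurring, and the vague appeal to ``diversity exhibiting a better split'' does not produce a bound of $n$. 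Without that bound the argument only yields $|S_t|\le |S_{t-1}|-1$, i.e.\ at most $k^n$ turns.

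The paper's proof avoids the plateau analysis entirely via a linear-algebra encoding. Each sequence in $[k]^n$ is identified with its indicator vector in $\{0,1\}^{nk}\subseteq\R^{nk}$, and the black-peg response is then just the dot product $\langle Q,H\rangle$. The key lemma is that, whenever $|S_t|\ge 2$, the minimax algorithm necessarily chooses a query linearly independent of all previous queries: a query in their span has a predictable dot product with every candidate, so it cannot shrink $S_t$ and receives score $|S_t|$, whereas querying any element of $S_t$ itself scores at most $|S_t|-1$. Since at most $nk$ vectors in $\R^{nk}$ can be linearly independent, the game ends in at most $nk$ turns. This replaces your combinatorial potential with a dimension count and sidesteps the plateau issue altogether.
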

	Knuth's Minimax algorithm is empirically near-optimal for solving small games of Mastermind ($n$ and $k$ less than 10) in as few guesses as possible. However, it has proven difficult to analyze the asymptotic performance of the Minimax algorithm, primarily because its behavior is determined by the distribution remaining solutions after a series of guesses, which is difficult to analyze in general \cite{ KT86, OS13}. To our knowledge, this is the first upper bound on the worst-case performance of the minimax algorithm, but if it performs near-optimally for large $n$ and $k$, we would expect this bound to be much smaller. We know, for example, that algorithms exist that use only $O(n \log k)$ guesses when $k$ is not too large \cite{DK76, KT86}.
	
	Our third result concerns to non-adaptive variants of Mastermind. We extend the following theorem:
	\begin{theorem}[Doerr, Sp\"ohel, Thomas, and Winzen, \cite{DS13}]\label{DSTW nonadapt}
		In black-peg, non-adaptive Mastermind with repeats,
		$
		\Omega\left( n\log(k)\right)
		$
		guesses are required to identify $H$,
	\end{theorem}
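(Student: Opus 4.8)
The plan is to run an information-theoretic argument, but to exploit the many-colors regime $k \ge n$ so as to beat the naive counting bound $(n+1)^m \ge k^n$, which yields only $m = \Omega\!\left(n\log k/\log n\right)$. The crucial observation is that when colors are plentiful a single black-peg answer is sharply concentrated near $0$, and therefore carries only a \emph{constant} number of bits rather than the $\log n$ bits that the nominal range $\{0,1,\dots,n\}$ superficially allows. (This is exactly where non-adaptivity is used: the queries are fixed in advance, so each answer is a deterministic function of $H$ with a query-independent law.)

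Concretely, I would draw the hidden vector $H$ uniformly from $[k]^n$, writing $\mathbb{H}(\cdot)$ for Shannon entropy, so that $\mathbb{H}(H) = n\log_2 k$. Fix the (non-adaptive) query sequence $Q_1,\dots,Q_m$ and set $b_t = b(Q_t, H)$. Because the queries are fixed and the coordinates of $H$ are independent and uniform, the event $(Q_t)_i = h_i$ has probability exactly $1/k$ independently across $i$; hence $b_t \sim \mathrm{Binomial}(n, 1/k)$ for \emph{every} $t$, irrespective of the content of $Q_t$. The key step is then to bound the per-query entropy. Since $k \ge n$, the mean is $\mu = n/k \le 1$, and among all distributions on the nonnegative integers with mean $\mu$ the entropy is maximized by the geometric distribution, whose entropy is a bounded function of $\mu$; for $\mu \le 1$ this gives $\mathbb{H}(b_t) \le C$ with an absolute constant (one may take $C = 2$ bits).

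To finish: if the codebreaker always succeeds, then $H$ is a deterministic function of the feedback, so $\mathbb{H}(H \mid b_1,\dots,b_m) = 0$, and since the feedback is itself a deterministic function of $H$ we get $n\log_2 k = \mathbb{H}(H) = \mathbb{I}(H; b_{1},\dots,b_m) = \mathbb{H}(b_1,\dots,b_m) \le \sum_{t=1}^m \mathbb{H}(b_t) \le Cm$ by subadditivity of entropy. Rearranging yields $m \ge \tfrac{n\log_2 k}{C} = \Omega(n\log k)$. An equivalent elementary route avoids entropy altogether: since $\mathbb{E}\big[\sum_t b_t\big] = mn/k \le m$, Markov's inequality gives $\sum_t b_t \le 2m$ with probability at least $1/2$, and the number of feedback vectors satisfying this is at most $\binom{3m}{m} = 2^{O(m)}$; injectivity of $H \mapsto (b_1,\dots,b_m)$ on even half of the $k^n$ hidden vectors then forces $2^{O(m)} \ge k^n/2$, i.e.\ $m = \Omega(n\log k)$.

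I expect the main obstacle to be purely in the per-query estimate and the regime hypothesis rather than in the skeleton above: one must confirm that the law of each $b_t$ is genuinely query-independent (immediate from coordinatewise independence) and then control $\mathbb{H}\big(\mathrm{Binomial}(n,1/k)\big)$ by an absolute constant, for which the many-colors assumption $k \ge n$ (equivalently $\mu \le 1$) is essential — outside this regime the same argument degrades to the weaker $\Omega\!\left(n\log k/\log(n/k)\right)$, so the bound is understood in the many-colors setting of \cite{DS13}.
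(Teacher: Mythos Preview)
Your proposal is correct and follows essentially the same argument the paper uses. The paper does not itself prove Theorem~\ref{DSTW nonadapt} (it is quoted from \cite{DS13}), but its proof of the no-repeats analogue, Theorem~\ref{nonAdaptiveTheorem}, is exactly the entropy--subadditivity method you outline: draw the hidden vector uniformly, equate its entropy with that of the feedback vector, and bound the latter by $\sum_t \mathbb{H}(b_t)$ with each summand at most an absolute constant. The only difference is cosmetic---the paper bounds $\mathbb{P}[b_t=x]\le 1/x!$ pointwise and sums $-p\log_2 p$ explicitly to obtain a constant below $3$, whereas you invoke the maximum-entropy characterization of the geometric law among nonnegative-integer distributions of mean $\mu\le 1$ to get $C\le 2$. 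Your explicit restriction to $k\ge n$ is appropriate and matches the intended scope of the cited result, and your alternative Markov-plus-counting route is a nice extra not present in the paper.
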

	proving the result in the no-repeats case.
	\begin{theorem}\label{nonAdaptiveTheorem}
		In black-peg, non-adaptive Mastermind with no repeats,
		$
			\Omega\left( n\log(k)\right)
		$
		guesses are required to identify $H$.
	\end{theorem}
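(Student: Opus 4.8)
The plan is to give a direct information-theoretic argument, adapting the entropy method underlying Theorem~\ref{DSTW nonadapt} to the injective setting rather than invoking it as a black box. Fix an arbitrary non-adaptive no-repeats strategy consisting of queries $Q_1,\dots,Q_m$, and draw the hidden sequence $H$ uniformly at random from the set of all injective sequences in $[k]^n$. Since the strategy must succeed on \emph{every} hidden sequence, the feedback vector $F(H) = (b(Q_1,H),\dots,b(Q_m,H))$ must determine $H$ uniquely; hence $H$ is a deterministic function of $F(H)$, so $\mathbb{H}(H) \le \mathbb{H}(F(H))$, where $\mathbb{H}(\cdot)$ denotes Shannon entropy. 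By subadditivity, $\mathbb{H}(F(H)) \le \sum_{j=1}^m \mathbb{H}(b(Q_j,H))$. The whole argument then reduces to lower-bounding $\mathbb{H}(H)$ and upper-bounding each coordinate entropy $\mathbb{H}(b(Q_j,H))$.

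For the first bound I would use that no-repeats forces $k \ge n$, so the number of injective sequences is $k!/(k-n)! = \prod_{i=0}^{n-1}(k-i)$; keeping only the factors with $i < n/2$ (each at least $k/2$) gives $\mathbb{H}(H) = \log_2\!\big(k!/(k-n)!\big) \ge \tfrac{n}{2}\log_2\tfrac{k}{2} = \Omega(n\log k)$. This is the single place where the no-repeats hypothesis genuinely enters: the support shrinks from $k^n$ to $k!/(k-n)!$, but both quantities have entropy $\Theta(n\log k)$ once $k \ge n$.

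For the second bound, which I expect to be the crux, I would show that each feedback coordinate carries only $O(1)$ bits. By symmetry of the uniform injective distribution the marginal law of each $h_i$ is uniform on $[k]$, so $\mathbb{E}[b(Q_j,H)] = \sum_{i=1}^n \Pr[q_{j,i} = h_i] = n/k \le 1$. Because $b(Q_j,H)$ is a nonnegative-integer-valued random variable, its entropy is maximized, for a fixed mean $\mu$, by a geometric distribution, whose entropy $(\mu+1)\log_2(\mu+1) - \mu\log_2\mu$ is increasing in $\mu$; evaluating at $\mu = n/k \le 1$ yields $\mathbb{H}(b(Q_j,H)) \le 2$. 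Combining the three estimates gives $\Omega(n\log k) = \mathbb{H}(H) \le \sum_{j=1}^m \mathbb{H}(b(Q_j,H)) \le 2m$, hence $m = \Omega(n\log k)$, as claimed.

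The hard part will be making the per-query entropy bound fully rigorous: specifically, justifying the maximum-entropy-for-fixed-mean inequality and confirming that $n/k \le 1$ — which is exactly where $k \ge n$, automatic for no-repeats, is used — caps each coordinate's entropy by an absolute constant. If one prefers to avoid the maximum-entropy principle, an elementary alternative is to bound $\mathbb{H}(b(Q_j,H))$ directly via concentration of $b(Q_j,H)$ about its mean $n/k \le 1$, using the negative association of the $n$ match indicators $[q_{j,i} = h_i]$ for a uniform injective sequence. Either route isolates the only substantive difference from the repeats case, namely re-verifying that the match indicators behave as in Theorem~\ref{DSTW nonadapt} under the injective distribution.
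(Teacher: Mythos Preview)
Your argument is correct and follows the same entropy-plus-subadditivity skeleton as the paper's proof; the only substantive difference is in how you cap the per-coordinate entropy $\mathbb{H}(b(Q_j,H))$ by an absolute constant. The paper bounds the point probabilities directly: a counting argument gives $\Pr[b(Q_j,H)=x]\le 1/x!$, and then one sums $\sum_{x\ge 3}\frac{\log_2(x!)}{x!}$ (handling $x\le 2$ by the trivial bound $-\alpha\log_2\alpha\le 1/(e\ln 2)$) to obtain $\mathbb{H}(b(Q_j,H))<3$. You instead compute the mean $\mathbb{E}[b(Q_j,H)]=n/k\le 1$ and invoke the maximum-entropy characterization of the geometric law on $\mathbb{Z}_{\ge 0}$ to get $\mathbb{H}(b(Q_j,H))\le 2$. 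Your route is slicker and yields a slightly better constant, at the price of citing the maximum-entropy principle, which some readers would want justified; the paper's route is fully elementary but needs the explicit tail estimate. The negative-association alternative you mention is not needed once the mean bound is in hand.
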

	For neither variant do the known upper bounds match these lower bounds when $n > k$; in the with-repeats case a corollary of a result in \cite{DS13} gives an upper bound of $O(k \log k)$ guesses, and for the no-repeats case no improvement over the $nk$ bound is known. On the other hand, the authors of \cite{DS13} are able to extend a result of Chv\'atal \cite{VC83} to provide tight bounds for $n \le k$ with in the with-repeats case.
	\subsection{Structure of the Paper}
	
	In Section 2 we prove Theorem 1, and the proof is found in 2.2. In Section 3 we prove Theorem 2, and in Section 4 we discuss Theorem 3, with the proof in 4.2.
	\section{Adaptive Variants of Mastermind}
	\subsection{The Permutation Game}
	We begin with notation necessary for the proof of Theorem 1. Recall that this is the adaptive, no-repeats variant with $n=k$. White pegs provide no information in this variant, so without loss of generality we assume black-peg responses. Throughout this section we will let $f(n)$ be the number of guesses required by an optimal strategy for the permutation game on $[n]^n$. We will bound $f(n)$ from below.
	
	We will use the derangement function $D(n)$, which counts the number of permutations in $S_n$ with no fixed points. It has the explicit form
	\begin{equation}\label{derangements}
		D(n) = n!\sum_{i=0}^n \frac{(-1)^i}{i!},
	\end{equation}
	and is the nearest integer to $n!/e$.  
	\subsubsection{Trivial Lower Bound}
	To motivate the proof of Theorem 1, we begin with the following simple result.
	\begin{proposition}[Trivial Lower Bound]
		\begin{equation*}
			f(n)\ge \log_{n}(n!) = n-\frac{n}{\ln(n)}+O(1).		
		\end{equation*}
	\end{proposition}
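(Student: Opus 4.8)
The plan is to argue information-theoretically by viewing an arbitrary deterministic adaptive strategy as a decision tree and bounding its number of leaves. Each internal node of the tree corresponds to a query the codebreaker submits, and the edges descending from that node correspond to the possible feedback values the codemaker could return; a leaf is reached precisely when the candidate set has been narrowed to a single permutation. Since the strategy must correctly identify every one of the $n!$ permutations of $[n]$, distinct hidden sequences must be separated at distinct leaves, so the tree has at least $n!$ leaves. If every query admits at most $r$ possible responses, then a tree all of whose root-to-leaf paths have length at most $d$ has at most $r^d$ leaves, whence $r^d \ge n!$ and $d \ge \log_r(n!)$. Taking $d$ to be the worst-case guess count of the strategy and minimizing over strategies yields $f(n) \ge \log_r(n!)$, so it only remains to pin down $r$.

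The key observation is that the black-peg feedback $b(Q_t, H)$ is an integer in $\{0, 1, \ldots, n\}$, but the value $n-1$ can never occur. Indeed, if a permutation $Q_t$ agreed with $H$ in exactly $n-1$ coordinates, then the one remaining coordinate would be forced: both $Q_t$ and $H$ are bijections of $[n]$, so the single unused value must occupy the single remaining position, forcing agreement in all $n$ coordinates and contradicting $b(Q_t,H)=n-1$. Hence at most $n$ distinct responses are possible per query, so we may take $r = n$ and conclude $f(n) \ge \log_n(n!)$.

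Finally, I would convert this into the stated asymptotic form via Stirling's approximation. Writing $\log_n(n!) = \ln(n!)/\ln n$ and using $\ln(n!) = n\ln n - n + O(\ln n)$ gives
\[
	\log_n(n!) = \frac{n\ln n - n + O(\ln n)}{\ln n} = n - \frac{n}{\ln n} + O(1),
\]
as claimed.

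I would flag the response-counting step as the only real content here. The naive count gives $n+1$ possible responses and hence base $n+1$; while that already yields the same leading asymptotics, the clean statement $f(n) \ge \log_n(n!)$ relies on excluding the impossible response $n-1$. Everything else is the standard decision-tree counting argument, so I do not anticipate a genuine obstacle: the purpose of the proposition is to establish the baseline bound that Theorem~\ref{permutationGameTheorem} will subsequently sharpen.
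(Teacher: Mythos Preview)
Your proof is correct and essentially the same as the paper's: both argue that a deterministic strategy taking $t$ turns can produce at most $n^t$ distinguishable response sequences (the paper via pigeonhole on $\{0,1,\ldots,n-2,n\}^t$, you via the equivalent decision-tree leaf count), and both rely on excluding the impossible response $n-1$ to get base $n$ rather than $n+1$. Your version is more explicit in justifying that exclusion and in carrying out the Stirling computation, but the underlying argument is identical.
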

	\begin{proof}
		In a deterministic strategy with $t$ queries, there are $n^t$ possible responses (specifically, $\{0,1,\ldots,n-2,n\}^t$). When $t < \log_{n}(n!)$ there are fewer possible responses than possible values of $H$, so by the pigeonhole principle at least two distinct values of $H$ will produce the same set of responses, and the codebreaker will be unable to distinguish between the two.
	\end{proof}
	
	\subsubsection{Solution subsets}\label{Buckets Definition}
	We now improve this trivial bound. Assume the codebreaker has some arbitrary fixed, deterministic guessing strategy. In $t$ turns of the game, the codebreaker has submitted queries $Q_1, \dots, Q_t$ and received responses $r_1,\dots,r_t$. The set of solution vectors $h$ satisfying $\Delta(Q_i,h) = r_i$ for all $i$ will be called the \textit{remaining solution set} $S_t$, and the codebreaker wins exactly when $|S_t| = 1$.
	
	Now we analyze the performance of the codebreaker's strategy. On turn $t$, the codebreaker makes the guess $Q_t$ according to some deterministic procedure. Each of the $n$ possible responses $\Delta(Q_t,H)$ produces a different remaining solution set $S_t$. Moreover, these different choices of $S_t$ partition $S_{t-1}$, as every vector in $S_{t-1}$ agrees with exactly one value of $\Delta(Q_t,H)$. 
	
	We call the subset of $S_{t-1}$ corresponding to a response $r$ a \textit{solution subset}, formally defined as 
	\begin{equation*}
		B_t(r) = \left\{h\in S_{t-1}\mid \Delta(Q_t, h) = r\right\},
	\end{equation*}
	where $S_0$ is the set of all permutations of $[n]$.
	
	Following this notation, we see that $S_t = B_{t}(\Delta(Q_t,H))$.
	
	It will be useful to know the sizes of the sets $B(r,Q) := \{h | \Delta(h,Q) = r\}$. The number of elements in $B(r,Q)$ is equal to the number of ways to choose $r$ indices that are fixed points with respect to the query sequence $Q_1$ multiplied by the number of ways to permute the remaining $n-r$ colors without any fixed points. Hence we have
	\begin{equation*}
		|B(r,Q)| = \binom{n}{r}D(n-r).
	\end{equation*}
	So we may define $B(r) := B(r,Q)$ as the right-hand side is independent of $Q$. By definition, $B_t(r)$ for a fixed $Q_t$ is a subset of $B(r,Q_t)$, and so
	\begin{equation*}
		|B_t(r)|\le |B(r)| = \binom{n}{r}D(n-r).
	\end{equation*}
	
	\subsection{Proof of Theorem 1}
	Continuing, we make use of two technical lemmas. The first bounds the sums of sizes of the subsets defined above:
	\begin{lemma}  For any positive integer $n$, we have:
		\begin{equation*}
			\sum_{i=x}^n\binom{n}{i}D(n-i) \le \frac{n!}{x!}.
		\end{equation*}
	\end{lemma}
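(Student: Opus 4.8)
The plan is to interpret both sides combinatorially. The crucial observation, which the excerpt has essentially already supplied, is that $\binom{n}{i}D(n-i)$ counts exactly those permutations of $[n]$ having precisely $i$ fixed points: choose which $i$ of the $n$ positions are fixed, then derange the remaining $n-i$. Summing over $i \ge x$, the left-hand side $\sum_{i=x}^n \binom{n}{i}D(n-i)$ is therefore the number of permutations of $[n]$ with \emph{at least} $x$ fixed points. So it suffices to show that this count is at most $n!/x!$.

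To bound the number of permutations with at least $x$ fixed points, I would count pairs $(\pi, S)$, where $\pi$ is a permutation of $[n]$ and $S$ is an $x$-element subset of the fixed-point set $F(\pi)$ of $\pi$. On one hand, every permutation $\pi$ with $|F(\pi)| \ge x$ contributes $\binom{|F(\pi)|}{x} \ge 1$ such pairs, so the total number of pairs is an upper bound for the quantity we care about. On the other hand, counting the same pairs by choosing $S$ first: there are $\binom{n}{x}$ choices for the $x$-set $S$, and for each such $S$ there are exactly $(n-x)!$ permutations fixing every element of $S$, since the remaining $n-x$ positions may be permuted freely (these permutations may have extra fixed points, but that is harmless here).

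Combining the two counts gives that the number of permutations with at least $x$ fixed points is at most $\binom{n}{x}(n-x)! = \frac{n!}{x!(n-x)!}(n-x)! = \frac{n!}{x!}$, which is precisely the claimed inequality. The only point requiring a moment's care—and it is mild—is the direction of the overcounting: a permutation with $m > x$ fixed points is tallied $\binom{m}{x} > 1$ times among the pairs, which is exactly why the argument yields an inequality rather than an equality, and this overcounting works in our favor since we seek only an upper bound. An alternative route would be induction on $x$ using the derangement recurrence, but the double-counting argument sidesteps any manipulation of $D$ and is by far the cleaner path; I expect no real obstacle beyond stating the bijective bookkeeping carefully.
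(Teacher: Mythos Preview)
Your proof is correct and is essentially the same argument as the paper's: both interpret the left-hand side as the number of permutations with at least $x$ fixed points and bound it by the overcount $\binom{n}{x}(n-x)! = n!/x!$ obtained from choosing $x$ fixed positions and permuting the rest freely. Your pair-counting formulation makes the overcounting step more explicit, but the underlying idea is identical.
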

	\begin{proof}
		We give a combinatorial proof. The left-hand side denotes the number of permutations of an $n$-element vector which have at least $x$ fixed points. The right-hand side denotes the number of ways to choose $x$ fixed points and simply permute the rest of the vector. This counts all vectors with at least $x$ fixed points at least once (and over-counts by some margin) so the inequality holds.
	\end{proof}
	This will allow us to prove a bound on the worst-case size of $|S_t|$. Note that for a fixed strategy and hidden vector $H$, every query $Q_t$, response $r_t$, and remaining solution subset $S_t$ are completely determined.
	\begin{lemma} For any fixed deterministic guessing strategy and $C_n < n$, there is at least one choice of hidden vector such that 
		\begin{equation*}
			\frac{|S_t|}{n!}\ge \frac{C_{n}! - (H_{C_{n}+t} - H_{C_{n}})}{(C_n+t)!},
		\end{equation*}
		for all $0\le t\le n-C_{n}$, where $H_n=\sum_{i = 1}^n\frac{1}{i}$ is the $n\nth$ harmonic number.
	\end{lemma}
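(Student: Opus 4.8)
The plan is to set up an adversary (codemaker) argument: rather than committing to $H$ in advance, I imagine the codemaker answering each query so as to keep the remaining solution set as large as possible, and only at the end selecting an $H$ consistent with all responses given. Concretely, I would prove by induction on $t$ that the adversary can maintain the invariant
\begin{equation*}
	\frac{|S_t|}{n!} \ge \frac{C_n! - (H_{C_n + t} - H_{C_n})}{(C_n+t)!},
\end{equation*}
and then take $H$ to be any element of the final solution set $S_{n-C_n}$. Since the strategy is deterministic and the sets $S_0 \supseteq S_1 \supseteq \cdots$ are nested, this single $H$ reproduces exactly the queries and responses along the adversarial path, so the bound holds for every $t$ simultaneously.

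The base case $t=0$ is immediate: $S_0$ is the set of all $n!$ permutations, so the left-hand side equals $1$, while the right-hand side is $(C_n! - 0)/C_n! = 1$ as well, giving equality. For the inductive step, recall that the query $Q_t$ partitions $S_{t-1}$ into the solution subsets $B_t(r)$ indexed by the possible responses $r \in \{0,1,\ldots,n-2,n\}$, with $|B_t(r)| \le \binom{n}{r} D(n-r)$. The key idea is to discard all responses $r \ge C_n + t$: by Lemma 1 these subsets together contain at most $\sum_{i=C_n+t}^{n} \binom{n}{i} D(n-i) \le n!/(C_n+t)!$ solutions. Hence the solutions lying in subsets with $r \le C_n+t-1$ number at least $|S_{t-1}| - n!/(C_n+t)!$, and they are distributed among at most $C_n + t$ responses. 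By pigeonhole the adversary may choose a response whose subset has size at least
\begin{equation*}
	|S_t| \ge \frac{1}{C_n+t}\left(|S_{t-1}| - \frac{n!}{(C_n+t)!}\right).
\end{equation*}

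It then remains to check that this recursion propagates the invariant. Substituting the inductive hypothesis for $|S_{t-1}|/n!$, dividing by $n!$, and writing $a = C_n+t$, the claimed bound for $|S_t|/n!$ reduces, after clearing denominators, exactly to the inequality $a(H_a - H_{a-1}) \ge 1$; since $H_a - H_{a-1} = 1/a$ this holds with equality, closing the induction. The crucial conceptual step is this pairing of Lemma 1 (which shows that the high-fixed-point responses can absorb only a $1/(C_n+t)!$ fraction of all permutations) with the pigeonhole over the remaining low-fixed-point responses, and the telescoping harmonic identity is precisely what makes the bound tight at each stage.

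I expect the main obstacle to be bookkeeping rather than ideas. One must confirm that the number of ``small'' responses is genuinely at most $C_n+t$ (the missing response $r=n-1$ only reduces the count, which helps the pigeonhole), and, more importantly, that the lower bound guaranteed for $|S_{t-1}|$ stays large enough that the discarded mass $n!/(C_n+t)!$ does not exhaust it. This positivity is what ensures each $S_t$ along the adversarial path is nonempty, so that a valid $H$ can actually be selected at the end; verifying it amounts to checking that the right-hand side of the invariant remains positive over the range $0 \le t \le n - C_n$.
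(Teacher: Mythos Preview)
Your proposal is correct and follows essentially the same route as the paper: both obtain the recursion $|S_t| \ge \frac{1}{C_n+t}\bigl(|S_{t-1}| - n!/(C_n+t)!\bigr)$ by combining Lemma~1 with a pigeonhole over the responses $r \le C_n+t-1$, and then close the induction on $t$ via the identity $H_a - H_{a-1} = 1/a$. Your flagged positivity concern is not a real obstacle, since the subsets $B_t(r)$ partition $S_{t-1}$ and hence the adversary always has a nonempty subset to select whenever $S_{t-1}\ne\emptyset$, guaranteeing that a valid $H\in S_{n-C_n}$ exists independently of the sign of the bound.
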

	We prove Lemma 2 at the end of the section.\\
	\begin{proof}[Proof of Theorem 1]
		Apply Lemma 2 for $t = n-C_n$. Then we have
		\begin{equation*}
			|S_{n-C_{n}}|  \ge n!\left(\frac{C_{n}! - (H_n - H_{C_{n}})}{n!}\right)
			= C_{n}! - (H_n-H_{C_{n}}).
		\end{equation*}
		For any $C_{n}$ such that $C_{n}! - (H_n-H_{C_{n}}) > 1$, the above bound gives $|S_{n-C_n}| > 1$. This would mean that after $n-C_n$ guesses of any strategy, the remaining solution set is not necessarily reduced to a single element after $n-C_n$ guesses, allowing us to state $f(n) > n-C_n$.
		
		Noting that $H_n$ is asymptotic to $\log n$ and both grow to infinity, as long as $\log n = o(C_n!)$ we will eventually have that $C_n!-H_n > 1$. Since we have, for example, that $\log x = o((\log \log x)!)$, we have that with $C_n = \lceil \log \log n \rceil$ the above inequality will eventually be satisfied.
		
		In conclusion, when $n$ is sufficiently large, the minimum number of remaining
		possible solutions after $n - \lceil \log\log n \rceil$ guesses is at least
		\begin{equation*}
			S_{n - \lceil \log\log n \rceil} 
			\ge (\log\log n)! - (H_n - H_{\log\log n})
			> 1.
		\end{equation*}
		Thus there is no strategy that can identify any hidden sequence in fewer than $n-\log\log n$ turns, which concludes the proof.
	\end{proof}

	\subsection{Proof of Lemma 2}
	\begin{proof}[Proof] 
		Recall that $S_t$ is the set of sequences that match the responses to the first $t$ questions of some fixed deterministic guessing strategy, given some hidden code $H$. Since all queries are possible when 0 questions have been asked, we have $|S_0| = n!$.
		
		From our definition of $B_t(r)$ above, the worst-case size of $S_t$ given $S_{t-1}$ is
		\begin{equation*}
			\max_{r\in\{0,1,\ldots,n\}}|B_{t}(r)|.
		\end{equation*}
		The minimum possible value of this maximum occurs when the subsets partition $S_{t-1}$ as evenly as possible. We know that $|B_t(r)| \leq |B(r)|$, which is easily seen to be decreasing in $r$. 
		In the optimal distribution of this type, some subsets of higher index will have size equal to their upper bound $B(r)$, while the rest will be partially filled to some fixed amount. So with $|S_{t-1}|$ solutions remaining, there is an optimal $x$ such that completely filling subsets $x$ through $n$ and splitting the remaining solutions among subsets 0 through $x-1$ will give us this best distribution, and therefore a lower bound on $|S_t|$ in the worst case.
		
		Hence, for some optimal value of $x$,
		\begin{equation}
			|S_t| \ge \frac 1x \left(|S_{t-1}|-\sum_{i = x}^{n}B(r)\right) = \frac 1x \left(|S_{t-1}|-\sum_{i = x}^{n}\binom{n}{i}D(n-i)\right).
		\end{equation}
		In fact, $x$ is `optimal' precisely in that it maximizes the right-hand side of this inequality, and so the inequality in fact holds for all $x$.
		
		We apply Lemma 1 to bound the rightmost term and get 
		\begin{equation}\label{recur}
			|S_t| \ge \frac{1}{x}\left(|S_{t-1}|-\frac{n!}{x!}\right).
		\end{equation}
		Given this recurrence, we proceed to prove Lemma 2 by induction.
		With $t=0$, we have $|S_0| = n!$. Then
		\begin{equation*}
			1 =\frac{|S_0|}{n!}\ge \frac{C_{n}! - (H_{C_{n}}-H_{C_{n}})}{C_{n}!} = 1,
		\end{equation*}
		and the inequality is satisfied. \\
		Now we move to the general case.
		Recalling that (\ref{recur}) holds for all $x$, we
		let $x=t+C_{n}$, giving
		\begin{equation*}
			\frac{|S_t|}{n!}
			\ge \frac{1}{C_{n}+t}\left(\frac{|S_{t-1}|}{n!}
			-\frac{1}{(C_{n}+t)!}\right).
		\end{equation*}
		Assuming the lemma inductively for $t-1$, we obtain
		\begin{align*}
			\frac{|S_t|}{n!}
			& \ge \frac{1}{C_{n}+t}\left(\frac{C_{n}!-(H_{C_{n}+t-1} - H_{C_{n}})}
			{(C_{n}+t-1)!} - \frac{1}{(C_{n}+t)!}\right)\\
			& \ge \left(\frac{C_{n}! - (H_{C_{n}+t-1} - H_{C_{n}}) - \frac{1}{C_{n}+t}}
			{(C_{n}+t)!}\right)\\
			& \ge \left(\frac{C_{n}! - (H_{C_{n}+t} - H_{C_{n}})}{(C_{n}+t)!}\right),
		\end{align*}
		which completes the induction.
	\end{proof}
	\textbf{Update:} El Ouali, Glazik, Sauerland, and Srivastav \cite{OS16} have announced an improvement of the lower bound in Theorem \ref{permutationGameTheorem} from $n-\log\log(n)$ to $n$, and an extension to $k > n$ with a lower bound of $k$.
	\section{Linear Algebra and the Minimax Algorithm}
	We now turn to a general upper bound on all Mastermind variants and note its application to Knuth's minimax algorithm in particular.

	We will represent an arbitrary query or hidden vector as a $(0,1)$-vector $A \in \R^{nk}$ in the following manner: 
	$$
	A_{in+j} = \begin{cases}
	1 & \text{this guess/solution assigns the $i^\text{th}$ spot the $j^\text{th}$ color} \\
	0 & \text{otherwise},
	\end{cases}
	$$
	where $0 \leq i \leq n-1$ and $0 \leq j \leq k-1$. As such, each set of indices $A_{in},\dots,A_{in+k-1}$ will have exactly one 1, as the $i^\text{th}$ position is exactly one color.
	
	With this notation, the black-peg distance becomes the dot product of the guess and the hidden vector, as there will be a contribution to the dot product exactly when both vectors have a one in the same spot, i.e. there is the same color in the same spot of both vectors. 
	
	The goal of Mastermind is then to find the unique valid $(0,1)$-vector such that its dot product with the hidden vector is $n$. By linearity of the dot product, once the codebreaker has queried any set of queries, it is possible to deduce the black-peg response to any linear combination of those queries as the corresponding linear combination of their responses. It follows that a winning strategy is simply to query a basis for the span of the set of valid queries, which, as a subspace of $\R^{nk}$, is of size at most $nk$. Note that this strategy does not make use of adaptive feedback, of white-peg responses, or the condition on repeated colors, and hence applies to every variant of Mastermind studied in this paper.
	
	This logic also allows us to bound the minimax algorithm (Definition \ref{minimax def}). 
	\subsection{Proof of Theorem 2}
	The theorem can be easily reduced to the following lemma:
	\begin{lemma}
		At each turn, if the minimax algorithm has not determined the hidden vector, it guesses a vector that is linearly independent of its previous guesses.
	\end{lemma}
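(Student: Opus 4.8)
The plan is to partition, at any fixed turn $t$, all candidate queries into those lying in the span of the previous guesses $Q_1,\dots,Q_{t-1}$ and those linearly independent of them, and to show that the minimax score separates the two classes. Let $S$ denote the current remaining solution set, and suppose the algorithm has not yet won, so $|S| \ge 2$. Recall that the score assigned to a candidate query $Q$ is the worst-case bucket size $\max_{r}\,|\{H \in S : \Delta(Q,H) = r\}|$, and that the algorithm guesses a query of minimum score. I would establish two facts: (i) every valid query in the span of the previous guesses has score exactly $|S|$, and (ii) some valid query has score strictly less than $|S|$. Together these force the minimizer to be linearly independent of the previous guesses.

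For (i) I would use the linearity observation established at the start of this section, namely that the black-peg response is the dot product $\Delta(Q,H) = \langle Q, H\rangle$, so that $\Delta$ is linear in $Q$. If $Q = \sum_i c_i Q_i$ is a linear combination of $Q_1,\dots,Q_{t-1}$, then for every $H$ we have $\langle Q, H\rangle = \sum_i c_i \langle Q_i, H\rangle$. But by definition of the remaining solution set, every $H \in S$ produces identical responses $\langle Q_i, H\rangle = r_i$ to the earlier queries, so $\langle Q, H\rangle = \sum_i c_i r_i$ takes the same value for all $H \in S$. Hence all of $S$ falls into a single response bucket and the score of $Q$ equals $|S|$.

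For (ii) I would exhibit an explicit probe with smaller score: any remaining solution $H^\ast \in S$, used as the query. Since $H^\ast$ is itself a valid coloring, $\langle H^\ast, H^\ast\rangle = n$, whereas any other valid solution $H \ne H^\ast$ differs from $H^\ast$ in at least one position and so satisfies $\langle H^\ast, H\rangle \le n-1$. Thus the response-$n$ bucket for the query $H^\ast$ contains $H^\ast$ alone, the remaining $|S|-1$ solutions of $S$ are distributed among the other response buckets, and the worst-case bucket size is at most $|S|-1 < |S|$.

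Combining (i) and (ii), the minimum score over all valid queries is strictly below $|S|$, while every query dependent on the previous guesses scores exactly $|S|$; therefore the query selected by the minimax rule (of minimum score, regardless of tie-breaking) cannot be dependent, which proves the lemma. The step to get right is (i), confirming that linear dependence collapses the entire remaining solution set into one bucket, since (ii) is immediate once a remaining solution is used as a probe. Once the lemma holds, Theorem \ref{minimaxTheorem} follows: the successive guesses form a linearly independent set in $\R^{nk}$, of which at most $nk$ can exist, so the algorithm terminates within $nk$ turns.
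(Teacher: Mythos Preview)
Your proof is correct and follows essentially the same approach as the paper's: both argue that a linearly dependent query places all of $S$ in a single response bucket (score $|S|$), while querying any element $H^\ast \in S$ guarantees a strictly smaller score since the response-$n$ bucket isolates $H^\ast$ and every other response eliminates $H^\ast$. Your write-up is slightly more explicit about the linearity computation $\langle Q,H\rangle = \sum_i c_i r_i$, but the structure and the key probe (an element of the remaining solution set) are identical.
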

	
	\begin{proof}
		
		From the reasoning above, when the hidden vector is not known it must be linearly independent from the previous queries (and indeed, so must the entire remaining solution set). At this point, guessing a vector $q$ that is a linear combination of the previous guesses returns no new information, so the size of the remaining solution set remains unchanged and the guess $q$ receives a score of $|S_{t-1}|$. If we can find a new query $r$ that is guaranteed to eliminate at least one vector from the remaining solution set, the minimax algorithm will choose $r$ over $q$. Choosing any vector from the remaining solution set will do the trick; a black-peg response of $n$ will cut $S_t$ down from at least two vectors to the single vector $r$, whereas any other response will certainly eliminate $r$ from $S_t$. Thus, the minimax algorithm will never make a guess that is linearly dependent on its previous guesses.
	\end{proof}
	Theorem \ref{minimaxTheorem} follows as an immediate corollary from this lemma: Since these vectors are members of $\R^{nk}$, the minimax algorithm can make at most $nk$ linearly independent guesses. After that, the minimax cannot make a linearly independent guess and by the above lemma, the hidden vector must be uniquely determined.
	
	\section{Non-Adaptive Variants}
	This section follows closely the reasoning  in \cite{DS13} as they analyze non-adaptive games.  We perform an analysis of the black-peg, non-adaptive, no-repeats variant. We make use of \textit{(Shannon) Entropy}, defined for a random variable $X$ to be:
	\begin{equation*}
		H(X)
		:= \sum_{x\in \text{Domain}(X)}\mathbb{P}[X=x]\cdot (-\log_2(\mathbb{P}[X=x])).
	\end{equation*}
	Entropy is \textit{subadditive}, 
	that is, if $X_1, X_2, \ldots, X_n$ are random variables and $X = (X_1,\dots,X_n)$ is the random vector containing the $X_i$ as entries, then
	\begin{equation}\label{subadditivityDefinition}
		H(X)\le \sum_{i=1}^{n}H(X_i).
	\end{equation}
	
	\subsection{Proof of Theorem \ref{nonAdaptiveTheorem}}
	\begin{proof}
		Consider a set $\{Q_1, Q_2, \ldots, Q_s\}$ of $s$ query sequences such that any possible hidden sequence may be uniquely determined by the responses $\{\Delta(Q_t, H)\}$. That is, if two possible hidden vectors $H$ and $H'$ satisfy $\Delta(Q_i, H) = \Delta(Q_i, H')$ for all $i$, then $H = H'$.
		
		Let the hidden vector $Z$ be sampled uniformly at random from the set of $k!/(k-n)!$ possibilities. Then the responses $Y_i = b(Z, q_i)$ are now random variables, and the vector $Y = (Y_1, Y_2, \ldots, Y_s)$ is also a random variable. By our assumptions, $Y$ always uniquely determines, and is uniquely determined by, $Z$, and so $H(Z) = H(Y)$. Since $Z$ is a random variable with $k!/(k-n)!$ outcomes of equal probability, we compute
		\begin{equation}\label{equalEntropy}
			H(Y) = H(Z) = \log_2\left(\frac{k!}{(k-n)!}\right).
		\end{equation}
		Combining this with (\ref{subadditivityDefinition}) yields:
		\begin{equation}\label{entropySum}
			\log_2\left(\frac{k!}{(k-n)!}\right)\le\sum_{i=1}^s H(Y_i).
		\end{equation}
		Now we bound $H(Y_i)$ by some absolute constant. By definition, $Y_i$ is precisely the black-peg distance between $Q_i$ and $Z$. Then
		\begin{equation}\label{entropyDef}
			H(Y_i)=-\sum_{x=0}^n \mathbb{P}[Y_i=x]\cdot\log_2(\mathbb{P}[Y_{i}=x]).
		\end{equation}
		The quantity $\mathbb{P}[Y_i=x]$ is probability that $Z$ is a solution vector with $x$ fixed points with respect to the query $q_i$. Using the terminology from section \ref{Buckets Definition}, this is the probability that $Z$ is in solution subset $B(x)$.
		We use a simple bound on $|B(x)|$: this counts the number of permutations with exactly $x$ fixed points, which is fewer than the number of ways to choose $x$ fixed points and permute the other colors arbitrarily. Thus:
		\begin{align*}
			\mathbb{P}[Y_i = x] = \frac{|B(x)|}{\left(\frac{k!}{(k-n)!}\right)}
			\leq \frac{\binom{n}{x}\frac{(k-x)!}{(k-n)!}}{\left(\frac{k!}{(k-n)!}\right)}
			= \frac{1}{x!}\cdot\frac{n(n-1)\cdots(n-x+1)}{k(k-1)\cdots(k-x+1)}
			\leq \frac{1}{x!}.
		\end{align*}
		We can substitute this upper bound into  (\ref{entropyDef}) for all $x<1/e$, because $-\alpha\log_2\alpha$ is an increasing function on that domain. For the first three values of $x$, we instead use the trivial upper bound $-\alpha\log_2\alpha \le  1/(e\log 2)$. Then
		\begin{align*}
			H(Y_i) &\leq \frac{3}{e\log 2}+\sum_{x=3}^n-\frac{1}{x!}\cdot\log_2\left(\frac{1}{x!}\right)
			\le \frac{3}{e\log 2}+\sum_{x=3}^\infty \frac{\log_2(x!)}{x!}
			<3.
		\end{align*}
		Combining this with (\ref{entropySum}) gives $H(Y) \leq 3s$, and since $H(Z) = H(Y)$ we have $H(Z) \leq 3s$. Substituting in (\ref{equalEntropy}) as a lower bound for $H(Z)$ and solving for $s$ gives
		\begin{equation*}
			s \geq \frac{1}{3}\log_2\left(\frac{k!}{(k-n)!}\right).
		\end{equation*}
		
		We can show that the right-hand side is $\Omega(n\log k)$. This is immediate for large $k$ relative to $n$ by bounding the ratio by $(k-n)^n$; for small $k$ (e.g. $k \leq 2n$) we bound the ratio by $n!$ and the claim follows from Stirling's approximation. So this gives us a lower bound of $\Omega(n \log k)$ turns for any non-adaptive strategy for Mastermind with no repeats and black-peg responses.
	
	\end{proof}
	\subsection{A Note on Non-Adaptive Variants with Repeats}
	Consider non-adaptive variants of Mastermind in which repetitions are allowed. When $k=n$, Theorem 13 of \cite{DS13} guarantees the existence of a set of $O(n \log n)$ queries which uniquely identify any hidden sequence $H$. When $k > n$, one can simply extend $H$ and all queries $Q_t$ by $k - n$ ``auxiliary'' positions. We fill these auxiliary positions with arbitrary colors, and adjust the codemaker's responses accordingly. Applying Theorem 13 of \cite{DS13} now guarantees the existence of a set of $O(k \log k)$ queries which will uniquely identify any hidden sequence.
	
\section{Further Work}
\begin{problem}
	In all adaptive variants of Mastermind, the best lower bounds are asymptotic to $n$. Can these lower bounds be improved?
\end{problem}
\begin{problem}
	In the variant of Mastermind with repeats, the best known strategy is $O(n\log\log k)$ \textnormal{\cite{DS13}}, whereas in the variant without repeats, it is $O(n \log k)$ \textnormal{\cite{OS13}}. Is it possible to extend or modify the first strategy to apply to the no-repeats game?
\end{problem}
\begin{problem}
	Theorem \ref{DSTW nonadapt} can be extended to white-peg responses as well by noting that the optimal white-peg strategy may be converted into a black-peg strategy losing a constant multiplicative factor by adding $k$ guesses, each composed entirely of pegs of a single color. The same construction does not apply to extend Theorem \ref{nonAdaptiveTheorem}. Can a lower bound be established in this case?
\end{problem}
\begin{problem}
	In non-adaptive variants, the best lower bounds are $O(n \log k)$, whereas upper bounds are either $O(k \log k)$ or $nk$. Can these bounds be brought closer together?
\end{problem}
	
	\section{Acknowledgments}
	We would like to thank Daniel Montealegre for supervising and assisting our work throughout the summer, and Nathan Kaplan for both creating and guiding our project and for his invaluable assistance in editing this paper. 
	We would also like to thank Sam Payne and the Summer Undergraduate Math Research at Yale program for organizing, funding, and supporting this project. SUMRY is supported in part by NSF grant CAREER DMS-1149054.

	\clearpage
	
	\bibliography{mybibfile}
	
\end{document}